\newcommand{\HF}{\widehat{\mathrm{HF}}}
\newcommand{\Sig}{\Sigma}
\newcommand{\ra}{\rightarrow}
\newcommand\Dual{\mathcal D}
\newcommand\Duality\Dual
\newcommand\relspinc{s}
\newcommand\ModSphere{\ModFlow\left({\mathbb S}\longrightarrow
\Sym^{g-1}(\Sigma_{1})\times \Sym^2(\Sigma_{2})\right)}
\newcommand\ModSpheres\ModSphere
\newcommand\UnparModSp{\widehat \ModSp}
\newcommand\UnparModFlow\UnparModSp
\newcommand\ModMaps{\mathcal M}
\newcommand\ModSp\ModMaps
\newcommand\spincrel\relspinc
\newtheorem{thm}{Theorem}[section]
\newtheorem{cor}[thm]{Corollary}
\newtheorem{lem}[thm]{Lemma}
\def\endproof{\relax\ifmmode\expandafter\endproofmath\else
  \unskip\nobreak\hfil\penalty50\hskip.75em\hbox{}\nobreak\hfil\bull
  {\parfillskip=0pt \finalhyphendemerits=0 \bigbreak}\fi}
\def\endproofmath$${\eqno\bull$$\bigbreak}
\def\bull{\vbox{\hrule\hbox{\vrule\kern3pt\vbox{\kern6pt}\kern3pt\vrule}\hrule}}
\newcommand{\Z}{\mathbb{Z}}
\newcommand{\ModSWfour}{\mathcal{M}}
\newcommand{\ModFlow}{\ModSWfour}
\newcommand{\SpinC}{{\mathrm{Spin}}^c}
\newcommand\abuts\Rightarrow
\newcommand\Sym{\mathrm{Sym}}
\begin{document}

\title[Seifert fibered 3-manifold with trivial HFH]{Seifert fibered homology spheres with trivial Heegaard Floer homology}%
\author{Eaman Eftekhary}%
\address{School of Mathematics, Institute for Research in Fundamental Sciences (IPM),
P. O. Box 19395-5746, Tehran, Iran}%
\email{eaman@ipm.ir}%

\thanks{}%
\keywords{Floer homology, Seifert fibered, homology spheres}%

\maketitle
\begin{abstract}
We show that among Seifert fibered integer homology spheres, Poincar\'e sphere (with either orientation) is the only non-trivial
example which has trivial Heegaard Floer homology. Together with an earlier result, this shows that if an integer homology sphere
has trivial Heegaard Floer homology, then it is a connected sum of a number of Poincar\'e spheres and hyperbolic homology spheres.
\end{abstract}
\section{Introduction}
Heegaard Floer homology, introduced by Ozsv\'ath and Szab\'o in \cite{OS-3mfld},
is an invariant of closed three-manifolds which has been relatively powerful in
distinguishing different manifolds from each other. Certain versions of these
invariants (hat version), which come in the form of graded abelian groups, may be computed
combinatorially by the result of Sarkar and Wang \cite{SW}. The advantage over the fundamental group is that it is immediate after
computation to decide whether the corresponding groups of two three-manifolds agree or
not, unlike non-abelian groups where identification of different presentations of the
group is a hard problem on its own.\\

It is interesting to ask a question similar to Poincar\'e conjecture about Heegaard Floer homology. {\emph{Is
there any non-trivial three-manifold with trivial Heegaard Floer homology (i.e. Heegaard Floer homology of $S^3$)?}}
Since Heegaard Floer homology package has a decomposition according to $\SpinC$ structures, this extra structure allows us
to distinguish non-homology spheres from the standard sphere. Yet, even among the integer homology spheres, one quickly
finds a counterexample: for the Poincar\'e sphere $P$, we have $\widehat{\mathrm{HF}}(P)= \widehat{\mathrm{HF}}(S^3)=\Z$.
The homological gradings on the two sides differ, but if we set $\tilde{P}=-P\#P$, there is an isomorphism of graded modules
$\HF(\tilde{P})=\HF(S^3)=(\Z)_0$. The same would be true for $P(n)=\#^n\tilde{P}$. But, are there any other examples?\\

 {\bf{Conjecture. }}{\emph{If for a homology sphere $Y$, the Heegaard Floer homology package (including
 the groups and their gradings) is isomorphic to the Heegaard Floer homology package of the sphere $S^3$,
 $Y$ is either the standard sphere or homeomorphic to one of $P(n)$ for some positive integer $n$.}}\\

This may sound quite unlikely; once we find one counter-example (i.e. $\tilde{P}$),
it is naively expected that many more counter-examples may be constructed. However, we
will provide strong evidence for the above conjecture. Namely, we will prove the following theorem, and 
a corollary of it which follows.

\begin{thm}
If $Y$ is a Seifert fibered integer homology sphere and $\HF(Y)=\Z$,
then $Y$ is either the standard sphere, or the Poincar\'e sphere with one of the two possible orientations.
\end{thm}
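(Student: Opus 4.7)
The plan is to combine the Ozsv\'ath--Szab\'o algorithm for the Heegaard Floer homology of plumbed three-manifolds with a Diophantine analysis of the Seifert invariants. Write $Y=\Sigma(a_1,\ldots,a_n)$ with $a_i\geq 2$ pairwise coprime; for $n\leq 2$ one has $Y\cong S^3$, so assume $n\geq 3$. Reversing orientation if necessary, $-Y$ bounds a negative-definite plumbing $X$ on a star-shaped graph $\Gamma$ with central vertex of weight $-e_0$, $e_0\geq 1$, and with $n$ legs encoding the continued-fraction expansions of the normalised Seifert fractions $b_i/a_i\in(0,1)$. The integer homology sphere hypothesis is the Diophantine relation
\[
e_0\,a_1\cdots a_n \;-\; \sum_{i=1}^n b_i\!\!\prod_{j\neq i}\!\! a_j \;=\; \pm 1.
\]

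The first step is to translate the Floer condition into combinatorics. Since $Y$ is an integer homology sphere, $\HF(Y)=\Z$ is equivalent to $\HFpred(Y)=0$; by $\HF(-Y)\cong\HF(Y)^{*}$, we likewise have $\HFpred(-Y)=0$. Since $\Gamma$ has at most one bad vertex (the central one), the Ozsv\'ath--Szab\'o algorithm identifies $\HFp(-Y)$ with a graded root built from characteristic vectors in $H^{2}(X;\Z)$, and the vanishing of $\HFpred(-Y)$ forces this graded root to reduce to a single infinite tower.

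The second step is to extract from this rigidity the spherical inequality $\sum_i 1/a_i>n-2$. Walking up each leg of $\Gamma$, the characteristic vectors minimising the square norm in their equivalence class modulo $2H_2(X;\Z)$ must all sit uniformly at the top of the graded root, which forces each continued-fraction block to be short and bounds the central weight $e_0$ in terms of $\sum_i(1-1/a_i)$; an induction on leg lengths then converts these bounds into the stated inequality. Among pairwise coprime integers $a_i\geq 2$ with $n\geq 3$, the only tuple satisfying $\sum 1/a_i>n-2$ is $n=3$, $\{a_1,a_2,a_3\}=\{2,3,5\}$, and plugging this into the Diophantine relation uniquely fixes $e_0=1$ and $(b_1,b_2,b_3)=(1,1,1)$ up to reordering, yielding $Y\cong\pm P$.

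The principal obstacle is the middle step, namely extracting the sharp spherical inequality from the a priori mild vanishing $\HFpred(-Y)=0$. Plumbings with long legs, many exceptional fibers ($n\geq 4$), and possible subtle cancellations in the lattice cohomology all complicate the bookkeeping of characteristic vectors along each leg; ruling out such cancellations and controlling the interaction between the central weight $e_0$ and the continued-fraction blocks is expected to absorb most of the paper's technical work.
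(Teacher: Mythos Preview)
Your proposal is a plan, not a proof: the decisive step---extracting the spherical inequality $\sum_i 1/a_i > n-2$ from the vanishing of $\HFpred(-Y)$---is asserted but not carried out. The sentences about ``walking up each leg,'' characteristic vectors ``sitting uniformly at the top of the graded root,'' and ``an induction on leg lengths'' do not constitute an argument; you yourself flag this as the principal obstacle. Without that step there is no proof, and it is not clear that the route you sketch (bounding $e_0$ in terms of $\sum_i(1-1/a_i)$ and then inducting on leg length) actually closes. In particular, long legs and large central weight can coexist with $\HFpred=0$ in more general plumbings, so one needs a mechanism that genuinely exploits the homology-sphere constraint together with the Ozsv\'ath--Szab\'o combinatorics; your outline does not supply one.

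The paper does \emph{not} go through the spherical inequality at all. Instead it works directly with the Ozsv\'ath--Szab\'o description of $\mathrm{Ker}(U)$ in terms of initial associations admitting good sequences, and splits on the central weight $m=m(v)$. If $m\leq -2$ there are many length-one good sequences, and uniqueness of the initial association forces every weight to be $-2$; plugging this into the Seifert relation immediately yields $n=3$ and $\Sigma(2,3,5)$. The substantive case is $m=-1$ with $n\geq 3$. Here the paper isolates the first two rays and compares good sequences on $G$ with those on auxiliary two-ray star graphs $G'$, $G''$ presenting $S^3$. Two lemmas do the work: a continued-fraction inequality (Lemma~\ref{lem:1}) controlling how far the first two rays of $G$ can deviate from an $S^3$ pair, and an exact count (Lemma~\ref{lem:2}) showing that in any good sequence for such an $S^3$ graph the central vertex is used exactly $A+C-1$ times. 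These force $-m^i_1\geq AC+1$ for $i\geq 3$, while the vertex count gives $-m^i_1\leq A+C$, a contradiction; in the residual situation one builds by hand a second initial association extending to a good sequence, contradicting $\mathrm{rk}\,\mathrm{Ker}(U)=1$. This is a fundamentally different, and much more concrete, mechanism than the graded-root/spherical-inequality heuristic you propose.
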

If $Y$ is a homology sphere with trivial Heegaard Floer homology, and $Y=Y_1\#...\#Y_n$ is the
prime decomposition of $Y$, then all $Y_i$ are prime homology spheres with $\HF(Y_i)=\Z$, by connected 
sum formula of Ozsv\'ath and szab\'o \cite{OS-properties}. Splicing formulas of the author in \cite{Ef-splicing} and the resulting 
surgery formulas of \cite{Ef-C-surgery}
may be used to show that no $Y_i$ can contain an incompressible torus (this is the main theorem of \cite{Ef-Essential}).
As a result, all $Y_i$ are geometric homology spheres, according to Thurston's geometrization conjecture (\cite{Thurston}), which is now 
Perelman's (and Morgan-Tian's) theorem (\cite{Per}, also see \cite{Morgan-Tian1} and \cite{Morgan-Tian2}).
Suppose that $Y_i$ is hyperbolic for $i=1,...,m$ and has one of the other $7$ geometries for $i>m$. Then the
above theorem implies that each $Y_i$ for $i>m$ is a Poincar\'e sphere, since all geometric homology spheres are
either hyperbolic or Seifert fibered. The above discussion may be re-stated as the following corollary.
\begin{cor}
If $Y$ is a prime homology sphere with $\HF(Y)=\Z$ and $Y$ is not one of $S^3, P$ or $-P$, it is a hyperbolic three-manifold.
\end{cor}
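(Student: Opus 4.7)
The plan is to derive the Corollary directly from the Theorem just stated, combined with two external inputs: the atoroidality result of \cite{Ef-Essential} (which itself rests on the splicing formulas of \cite{Ef-splicing} and the surgery formulas of \cite{Ef-C-surgery}), together with Perelman's resolution of Thurston's geometrization conjecture \cite{Per,Morgan-Tian1,Morgan-Tian2}. The argument is essentially a specialization to a single prime summand of the chain of implications already sketched in the paragraph preceding the Corollary.

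First I would apply the main theorem of \cite{Ef-Essential} to conclude that the prime homology sphere $Y$ with $\HF(Y)=\Z$ contains no incompressible torus. Since $Y$ is already prime, this atoroidality makes the JSJ decomposition of $Y$ trivial, so by geometrization $Y$ itself carries one of the eight Thurston model geometries. As the excerpt records, the only Thurston geometries supporting a closed integer homology sphere are the hyperbolic one and the six Seifert-fibered ones, so $Y$ is either hyperbolic or Seifert fibered.

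Finally, if $Y$ turned out to be Seifert fibered, then the Theorem proved above would force $Y\in\{S^3,P,-P\}$, contradicting the hypothesis of the Corollary. Hence the only remaining possibility is that $Y$ is hyperbolic, which is exactly what we want to prove.

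The deep content is entirely outsourced: the Floer-theoretic heavy lifting lives in \cite{Ef-Essential} (and ultimately in the splicing/surgery machinery of \cite{Ef-splicing,Ef-C-surgery}), while the passage from ``atoroidal'' to ``geometric'' is purely topological and rests on geometrization. Once those black boxes are in hand, no additional Heegaard Floer computation is required beyond the Theorem itself, and the main obstacle one might worry about --- verifying that a prime homology sphere with trivial $\HF$ really is atoroidal --- is settled precisely by \cite{Ef-Essential}.
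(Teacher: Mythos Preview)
Your proposal is correct and follows essentially the same approach as the paper: the argument given in the paragraph preceding the Corollary is precisely the chain \cite{Ef-Essential} $\Rightarrow$ atoroidal $\Rightarrow$ geometric (via geometrization) $\Rightarrow$ hyperbolic or Seifert fibered, with the Theorem ruling out the Seifert fibered case. No additional ideas are needed.
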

Thus, in order to prove the conjecture, we now need to prove the following reduced form.\\

{\bf{Conjecture (reduced form).}} {\emph{If $Y$ is a hyperbolic homology sphere, 
the rank of $\widehat{\mathrm{HF}}(Y)$ is bigger than $1$.}}\\

In the following section, we will recall the link between Seifert fibered homology spheres and plumbed three-manifolds. 
We will also quote the result of Ozsv\'ath and Szab\'o on computing Heegaard Floer homology of plumbed three-manifolds, 
from \cite{OS-plumbing}. Certain lemmas about plumbing diagrams of the standard sphere are proved in section $3$, which are
used in section $4$ to show that the number of generators for hat Heegaard Floer homology of $Y$ in the algorithm of Ozsv\'ath and Szab\'o 
is at least $2$, if the Seifert fibered homology sphere $Y$ is not the standard sphere or the Poincar\'e sphere.\\  

{\bf{Acknowledgement.}} The author would like to thank Yi Ni for some helpful discussions.\\

\section{Seifert fibered homology spheres and plumbing diagrams}
All oriented Seifert fibered homology spheres may be realized as the three-manifolds appearing in the boundary of 
the four-manifolds obtained by plumbing
disk bundles according to negative definite star-like graphs. Thus, Heegaard Floer homology of such manifolds may
be computed using the algorithm of  Ozsv\'ath and Szab\'o in \cite{OS-plumbing}. We recall the definition and construction
of \cite{OS-plumbing} in this section.\\

A graph $G$ equipped with an integer valued weight function $m$ on its
vertices is called a weighted graph. A weighted graph $G$ gives a
four-manifold $X(G)$ which is obtained by plumbing disk bundles over
spheres, corresponding to the vertices of $G$, according to the pattern
of the graph $G$. The disk bundle over the sphere corresponding to the vertex
$v$ is chosen so that its Euler number is $m(v)$. The sphere associated with $v$
is plumbed to the sphere associated with $w$ precisely when the two vertices
are connected in $G$ by an edge. The boundary of $X(G)$ is a closed three-manifold
which will be denoted by $Y(G)$. The homology group $\mathrm{H}_2(X(G),\Z)$ is
generated by vertices of $G$ and the intersection number $[v].[w]$ is $m(v)$
if $v=w$, is $1$ if $v$ and $w$ are connected in $G$ by an edge and $v\neq w$,  and is zero
otherwise. This gives an intersection matrix $M(G)$. A weighted graph is
negative definite if it is a disjoint union of trees and $M(G)$
is negative definite. A bad vertex is a vertex $v$ such that $m(v)>-d(v)$,
where $d(v)$ is the degree of $v$. Ozsv\'ath and Szab\'o give an algorithm
for computing $\mathrm{HF}^+(Y(G))$ from the combinatorics of the weighted graph $G$ 
when $G$ is a negative definite
graph with at most one bad vertex. In particular, they have a relatively easy
algorithm for computing the kernel of the shift map
$$U:\mathrm{HF}^+(Y(G);\Z/2\Z)\ra \mathrm{HF}^+(Y(G);\Z/2\Z).$$
Note that when  $\HF(Y)=\Z$ for a $3$-manifold $Y$, from the homology long exact sequence
(see \cite{OS-properties})
\begin{displaymath}
\dots \longrightarrow \HF(Y) \longrightarrow \mathrm{HF}^+(Y) \xrightarrow{U}
\mathrm{HF}^+(Y)\longrightarrow \HF(Y) \longrightarrow \dots
\end{displaymath}
and stabilization properties of $\mathrm{HF}^+(Y)$ we may conclude that 
$\mathrm{Ker}(U)=\Z/2\Z$.\\

Define an {\emph{association}} to be a map $n:V(G)\ra \Z$, where $V(G)$ is the set of vertices
of $G$, such that $n(v)$ has the same parity as $m(v)$ and $|n(v)|\leq -m(v)$. An association
is {\emph{initial}} if $m(v)< n(v) \leq -m(v)$ for all $v\in V(G)$, and is {\emph{final}}
if $m(v)\leq n(v)<-m(v)$ for every vertex $v\in V(G)$. We may {\emph{change}} an association $n$ to an association
$n'$ if there is some vertex $v$ such that the following three conditions are satisfied:\\
$1)$ $-n(v)=n'(v)=m(v)$,\\
$2)$ $n'(w)=n(w)+2$ for all $w\in V(G)$ which are connected to $v$ by an edge, \\
$3)$ $n(w)=n'(w)$ for all other vertices of $G$.\\
A \emph{good sequence} is a sequence $n_0,...,n_N$ of associations such that $n_0$ is initial and
$n_N$ is final, and such that $n_i$ may be changed to $n_{i+1}$ as above, for $0\leq i<N$.
Ozsv\'ath and Szab\'o show that the kernel of $U$ is generated by initial associations which
may be completed to good sequences, if
$G$ is a negative definite graph with at most one bad vertex. We will use this 
description of the kernel of the shift map on Heegaard Floer homology groups in the upcoming sections.\\

Seifert fibered homology spheres are given as $Y(G)$, where $G$ is a start-shape graph, with
a central vertex $v$ and tails $w^i_j$ of vertices, where $i=1,...,n,$ and  $j=1,...,p_i$,
such that $w^i_1$ is connected by an
edge to $v$ and each $w^i_j$ is connected to $w^i_{j-1}$ by an edge for $j>1$ (and there is no other
edge). Denote $m(w^i_j)$ by $m^i_j$ and $m(v)$ by $m$. Note that $m$ and all $m^i_j$ are negative. Since a
$-1$-sphere corresponding to a vertex with at most two neighbors in the graph may be blown down in the four-manifold $X(G)$
without changing the boundary $Y(G)$, 
we may assume that $m^i_j<-1$ for all $i,j$.
Thus, the graph would have at most one bad vertex, which is the vertex $v$. 
Define $\frac{a_i}{b_i}=[m^i_1;m^i_2;...;m^i_{p_i}]$ where
\begin{equation}
[m_1;m_2;...;m_{p}]
:=m_1-\cfrac{1}{m_2-\cfrac{1}{\ddots -\cfrac{1}{m_{p-1}-\cfrac{1}{m_{p}}}}}.
\end{equation}
In order for the resulting three-manifold $Y(G)$ to be a Seifert fibered manifold, 
this data should satisfy the following equation
\begin{equation}\label{eq:plumbing-condition}
a_1a_2...a_k(-m+\frac{b_1}{a_1}+\frac{b_2}{a_2}+...+\frac{b_k}{a_k})=1,
\end{equation}
and if this is the case, the corresponding three-manifold is  denoted by $\Sigma(a_1,...,a_k)$. The assumption that $Y(G)$ is a
homology sphere is equivalent to $\mathrm{det}(M(G))=\pm 1$. A weighted graph $G$ of the above form
will be called a negative definite star-like graph with $n$ \emph{rays}.
All homology Seifert fibered manifolds may be obtained in this way (see \cite{homology-spheres}).\\

If Heegaard Floer homology of $Y(G)$ is trivial(and consequently, the kernel of $U:\mathrm{HF}^+(Y(G))\ra \mathrm{HF}^+(Y(G))$
is $1$-dimensional), there is a unique initial association which may be completed
to a good sequence  associated with the
negative definite graph $G$. In other words, the initial element in all good sequences is the same.
If this is the case and $m<-1$, we will have $m(w)\leq -2$ for all vertices
$w\in V(G)$ and any association $n:V(G)\ra \Z$ with $m(w)<n(w)<-m(w)$ for all $w\in V(G)$ is a good sequence
of length one (i.e. $n$ is both initial and final). The interval $[m(w)+2,2-m(w)]$ consists of $-1-m(w)$ elements
with the same parity as $m(w)$. This implies that the rank of $\mathrm{Ker}(U)$ is at least equal to
$$d(G)=\prod_{w\in V(G)}(-1-m(w))\geq 1.$$
If  Heegaard Floer homology of $Y(G)$ is trivial, we should have $d(G)=1$, which implies that $m(w)=-2$ for all
$w\in V(G)$, i.e. all $m^i_j$ are equal to $-2$. With the above notation,
we will have $\frac{a_i}{b_i}=-\frac{p_i+1}{p_i}$.
Thus, equation~\ref{eq:plumbing-condition} reads as follows
\begin{equation}
\begin{split}
2-(\frac{p_1}{p_1+1}+\frac{p_2}{p_2+1}+&...+\frac{p_n}{p_n+1})\\
&=\frac{1}{(p_1+1)(p_2+1)...(p_n+1)}>0.
\end{split}
\end{equation}
Since $ \frac{1}{2}\leq \frac{p_i}{p_i+1}<1$, we should have $n=3$. It is then an easy exercise to check that
the only possible triple $(p_1,p_2,p_3)$ with $p_1\leq p_2\leq p_3$ satisfying the above equation is
$(1,2,4)$ which gives $P=\Sig(2,3,5)$ as $Y(G)$.\\

If $Y(G)$ is not the standard sphere or the Poincar\'e sphere, and Heegaard Floer homology of $Y(G)$ is
trivial, we may thus assume that $m=-1$. Since $-1$-spheres with at most two neighbors may be blown down
in a plumbing diagram, after simplification we may assume that $n\geq 3$. In order to show that for no such
negative definite graph $G$ the Heegaard Floer homology of $Y(G)$ is trivial, we will study the
combinatorics of certain plumbing diagrams for $S^3$ in the upcoming section.

\section{Plumbing diagrams of $S^3$}
In this section, we will consider star-like plumbing diagrams of $S^3$ of the  type
described in the previous section, with $2$ rays and $m=-1$.
These correspond to pairs of negative rational numbers  $(\frac{a_1}{b_1},\frac{a_2}{b_2})$, with $a_1,a_2>0$ such that
\begin{equation}\label{eq:sphere}
1+\frac{b_1}{a_1}+\frac{b_2}{a_2}=\frac{1}{a_1a_2}.
\end{equation}
This equation implies that precisely one of the two negative rational numbers $a_1/b_1$ and $a_2/b_2$ is
greater than or equal to $-2$ (or equivalently, that  one of the two negative rational
numbers $b_1/a_1$ and $b_2/a_2$ is less than or equal to $-1/2$ and the other one is greater than $-1/2$).
If $a_1/b_1=-2=[-2]$, it is easy to see that $a_2=1-2b_2$ and $a_2/b_2=[-3;-2,...;-2]$, where the number of $(-2)$s
in this continued fraction representation is equal to $-b_2-1$. 
Suppose that the quadruple $(a_1,b_1,a_2,b_2)$ satisfies equation~\ref{eq:sphere}, and $a_1/b_1> -2$.
It may be checked that under this assumption, the quadruple $(a_1',b_1',a_2',b_2')=(-b_1,2b_1+a_1,a_2+b_2,b_2)$
would be a new solution to equation~\ref{eq:sphere}.
Since we assume that $a_i$ are both positive and both $b_i$ are negative, both $a_i'$ would be positive and
both $b_i'$ would be negative. Since $b_1/a_1>-1$, we should have $0<a_1'<a_1$ 
and $0<a_2'=a_2+b_2<a_2$. This implies that the new quadruple is \emph{simpler}, in a sense, than the old quadruple.
In terms of the continued fractions, the above operation may be described as follows. If $a_1/b_1=[t_1;...;t_p]$ and
$a_2/b_2=[s_1;...;s_q]$, the assumption $a_1/b_1\geq -2$ implies that $t_1=-2$, and the assumption
$a_2/b_2<-2$ implies that $s_1<-2$. Then we would have $a_1'/b_1'=[t_2;t_3;...;t_p]$ and $a_2'/b_2'=[s_1+1;s_2;...;s_q]$.
 The new graph $G'$ corresponds
to a new four-manifold $X(G')$ which is obtained from $X(G)$ by blowing down a $-1$-sphere corresponding to the
vertex $v$. The boundary $Y(G')$ remains the same as $Y(G)=S^3$. We may thus check some of the claims
about such plumbing diagrams
by a simple induction. To do so, we should prove the claim for the quadruples $(2,-1,1+2k,-k)$, with $k\geq 1$, 
and prove that if the claim is true for $(a_1';b_1';a_2';b_2')$, it would be true for $(a_1;b_1;a_2;b_2)$.
The proof of the following lemma is an example of such inductions.\\
\begin{lem}\label{lem:1}
 If $a_1/b_1=[t_1;...;t_p]$ and $a_2/b_2=[s_1;...;s_q]$ satisfy equation~\ref{eq:sphere} then
\begin{equation}
\begin{split}
&\frac{1}{[t_1;...,t_{p-1};t_p+1]}+\frac{1}{[s_1;...;s_q]}\leq -1, \ \ \&\\
&\frac{1}{[t_1;...;t_p]}+\frac{1}{[s_1;...;s_{q-1};s_q+1]}\leq -1.
\end{split}
\end{equation}
\end{lem}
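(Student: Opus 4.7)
My plan is to induct on the combined length $p+q$ of the two continued fraction expansions, following the reduction move recalled immediately before the lemma. At the level of continued fractions that move sends $[t_1;\ldots;t_p] \mapsto [t_2;\ldots;t_p]$ and $[s_1;\ldots;s_q] \mapsto [s_1+1;s_2;\ldots;s_q]$ whenever $t_1 = -2$ and $s_1 \leq -3$ (equivalently, $a_1/b_1 > -2 > a_2/b_2$), so $p+q$ drops by exactly one and a strict induction is available. The base case is the quadruple $(a_1,b_1,a_2,b_2) = (2,-1,1+2k,-k)$ with $p = 1$ and $[s_1;\ldots;s_q] = [-3;-2;\ldots;-2]$ having $k-1$ trailing $-2$'s. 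I would check both inequalities directly: $[t_p+1] = -1$ immediately gives the first (with strict inequality, since $b_2/a_2 < 0$), while iterated application of the telescoping identity $[u_1;\ldots;u_{r-1};-1] = [u_1;\ldots;u_{r-2};u_{r-1}+1]$ collapses $[s_1;\ldots;s_{q-1};s_q+1]$ to $[-2]$ and hence delivers the second with equality.

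For the inductive step, after assuming (by the symmetry between the two inequalities) that $a_1/b_1 > -2$, I would set
$$\alpha = [t_2;\ldots;t_{p-1};t_p+1], \qquad \beta = [s_1+1;s_2;\ldots;s_q], \qquad x = 1/\alpha, \qquad y = 1/\beta,$$
so that the first inequality of the induction hypothesis for the reduced quadruple reads $x + y \leq -1$. The two elementary recursions
$$[t_1;t_2;\ldots;t_{p-1};t_p+1] = -2 - 1/\alpha, \qquad [s_1;s_2;\ldots;s_q] = \beta - 1$$
then rewrite the first inequality for the original quadruple as the purely algebraic statement
$$\frac{-1}{2+x} + \frac{y}{1-y} \leq -1.$$
The analogous substitution $u = 1/[t_2;\ldots;t_p]$, $v = 1/[s_1+1;s_2;\ldots;s_{q-1};s_q+1]$, with $u+v \leq -1$ coming from the second part of the induction hypothesis, reduces the second inequality to exactly the same two-variable inequality, so a single algebraic check handles both.

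The core of the argument is then to verify that inequality on the region $\{(x,y)\in [-1,0)^2 : x+y \leq -1\}$, which I would do directly. On the boundary line $x+y = -1$, substituting $y = -1-x$ telescopes the left-hand side to $-(2+x)/(2+x) = -1$; since the two partial derivatives $1/(2+x)^2$ and $1/(1-y)^2$ are strictly positive throughout the region, decreasing either coordinate below that line only decreases the left-hand side. The step I expect to be the main nuisance is justifying that $\alpha$ and $\beta$ actually lie in $(-\infty,-1]$, so that $x,y \in [-1,0)$ as required by this computation; this needs a short secondary induction showing that any continued fraction $[u_1;\ldots;u_r]$ with entries all $\leq -1$ and leading entry $\leq -2$ has value $\leq -1$, together with some care when the trailing $-1$ of $\alpha$ cascades via the telescoping identity all the way down to a single integer $t_2 + (p-2)$.
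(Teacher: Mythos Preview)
Your proposal is correct and follows essentially the same induction as the paper: same base case $(2,-1,1+2k,-k)$, same reduction move stripping $t_1=-2$ and incrementing $s_1$, and the same strategy of pulling back both target inequalities to the inductive hypothesis. The only real difference is cosmetic: where the paper keeps $s_1$ and $w=[s_2;\ldots;s_q]$ separate and verifies the implication $(s+2)uw+(s+1)w\geq 1+u \Rightarrow \frac{u}{2u+1}+\frac{w}{1-sw}\geq 1$ by direct algebra, you absorb $s_1$ into $\beta=[s_1+1;s_2;\ldots;s_q]$ via the identity $[s_1;\ldots]=\beta-1$, reducing both inequalities to the single clean statement $\frac{-1}{2+x}+\frac{y}{1-y}\leq -1$ on $\{x+y\leq -1\}$, which you then dispatch by a boundary-plus-monotonicity argument. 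Your version is arguably tidier and makes the needed sign checks (that $x,y\in[-1,0)$) explicit, whereas the paper leaves them implicit; the small slip in your parenthetical about the cascade collapsing to ``$t_2+(p-2)$'' is harmless, since all you actually need there is $\alpha\leq -1$.
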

\begin{proof}
 We prove this lemma by induction. For the quadruple $(2,-1,2k+1,-k)$ we have $p=1$, $q=k$, $t_1=-2$, $s_1=-3$, and 
 $s_2=...=s_k=-2$, and both claims are easy to check. 
 Now suppose that the claim is true for $(a_1';b_1';a_2';b_2')$, and that $a_1/b_1=[t_1;...;t_p]$ and
$a_2/b_2=[s_1;...;s_q]$ are as above. Moreover, without loss of generality, assume that $t_1=-2$ and
that $a_1'/b_1'=[t_2;t_3;...;t_p]$ and $a_2'/b_2'=[s_1+1;s_2;...;s_q]$.
Let $u=[t_2;...;t_{p-1};t_p+1]$ and $w=[s_2;... ;s_q]$. Letting $s=s_1$ we should thus have (from the induction hypothesis)
\begin{equation}\label{eq:6}
\begin{split}
&\frac{1}{u}+\frac{1}{s+1-\frac{1}{w}}\leq -1\\
\Rightarrow\ &(s+2)uw+(s+1)w\geq 1+u.
\end{split}
\end{equation}
In order to prove the claim by induction, we should show
\begin{equation}\label{eq:7}
\begin{split}
&\frac{1}{-2-\frac{1}{u}}+\frac{1}{s-\frac{1}{w}}\leq -1\\
\Leftrightarrow\ &\frac{u}{2u+1}+\frac{w}{1-sw}\geq 1.
\end{split}
\end{equation}
Since $(s+2)uw+(s+1)w\geq 1+u$ by equation~\ref{eq:6}, the second inequality in equation~\ref{eq:7} 
is satisfied, and thus, so does the first inequality. The proof of the
second inequality in the statement of the lemma is identical. We just need to set $u=[t_2;...;t_p]$ and $w=[s_2;... ;s_{q-1},s_q+1]$.
\end{proof}
Suppose now that the  negative definite  two-ray star-like graph  $G$
 is given, and $a_1/b_1=[t_1;...;t_p]$ and $a_2/b_2=[s_1;...;s_q]$ are
the continued fractions corresponding to the two rays, as above, so that $Y(G)$ is the standard sphere.
Since the kernel of the map $U:\mathrm{HF}^+(S^3)\ra \mathrm{HF}^+(S^3)$
is generated by a unique element, every good sequence  associated with
the weighted graph $G$ should start with a unique initial association.
Denote one such good sequence by $n_0,n_1,...,n_N$, where $n_i:V(G)\ra \Z$ is an association for 
$i=0,1,...,N$.
So in particular, every other good association should start with $n_0$ too.
If $n:V(G)\ra \Z$ is an
initial association so that $n(w)\leq n_0(w)$ for all $w\in V(G)$,
it is easy to show that the sequence $n_0,...,n_N$ may be modified to
a good sequence $n=n_0',n_1',...,n_M'$ for some $M\leq N$. The uniqueness
of the initial association in the good sequences thus implies that there is no initial
association $n$ with $n\leq n_0$ and $n\neq n_0$, i.e.
$n_0(w)=2+m(w)$ for all $w\in V(G)$.  On the other hand, if $n_0,...,n_N$ is a good sequence,
so is $-n_N,-n_{N-1},...,-n_0$. Again, as a consequence of the uniqueness of the
initial association of good sequences, we  have $-n_N=n_0$. Thus
$n_N(w)=-m(w)-2$ for all $w\in V(G)$.\\
When we change $n_{i-1}$ to $n_{i}$ there
is a unique vertex $w_i\in V(G)$ with the property that $n_{i-1}(w_i)=-m(w_i)$ and $n_i(w_i)=m(w_i)$. We will call the sequence
$w_1,...,w_N$ the {\emph{sequence of vertices}} associated with the good sequence $n_0,...,n_N$.\\
\begin{lem}\label{lem:2}
If the graph $G$ is a negative definite star-like graph with two rays corresponding to the pair of negative rational numbers
$(a_1/b_1,a_2/b_2)$ with $a_1,a_2>0$ and with vertices $v,w^1_1,...,w^1_p,w^2_1,...,w^2_q$ as above,
the number of times the vertex $v$ appears in the sequence of vertices $w_1,...,w_N$ associated with any good sequence is
 equal to $a_1+a_2-1$.
\end{lem}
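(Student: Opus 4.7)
The plan is to convert the combinatorial counting problem into a single linear algebra problem governed by the intersection form $M=M(G)$ of the plumbing. Observe first that each change of an association at a vertex $u$ adds the vector $2Me_u$ to the association (viewed as an element of $\Z^{V(G)}$); consequently, if $k\colon V(G)\to\Z$ records how often each vertex appears in a good sequence $n_0,\ldots,n_N$, then
\begin{equation*}
2Mk=n_N-n_0=(-m-2\mathbf{1})-(m+2\mathbf{1})=-2m-4\mathbf{1}.
\end{equation*}
Because $\det M=\pm 1$ (as $Y(G)=S^3$), the equation $Mk=-m-2\mathbf{1}$ has a unique integer solution. This simultaneously shows that $k_v$ is independent of the choice of good sequence and reduces the lemma to showing that this unique $k_v$ equals $a_1+a_2-1$.

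Using $m(v)=-1$, the equation at the central vertex reads $k_v=1+k^1_1+k^2_1$, where $k^i_1=k(w^i_1)$. Let $M^i$ denote the tridiagonal matrix of the $i$-th ray. Restricting $Mk=-m-2\mathbf{1}$ to the ray-$i$ block gives $M^ik^i=(-m^i-2\mathbf{1})-k_v e^i_1$, where $e^i_1$ is the standard basis vector at $w^i_1$. Introducing the ``pure ray'' vector $\tilde k^i:=(M^i)^{-1}(-m^i-2\mathbf{1})$ yields $k^i_1=\tilde k^i_1-k_v(M^i)^{-1}_{11}$, and substitution back into the central-vertex equation gives
\begin{equation*}
k_v\left(1+(M^1)^{-1}_{11}+(M^2)^{-1}_{11}\right)=1+\tilde k^1_1+\tilde k^2_1.
\end{equation*}

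The proof is completed by computing two entries of each $(M^i)^{-1}$. A standard continued-fraction computation identifies $\det M^i=(-1)^{p_i}a_i$ and $\det(M^i_{11})=(-1)^{p_i}b_i$, which yields $(M^i)^{-1}_{11}=b_i/a_i$; together with equation~\eqref{eq:sphere} this collapses the coefficient on the left of the displayed identity to $1/(a_1a_2)$, so that $k_v=a_1a_2(1+\tilde k^1_1+\tilde k^2_1)$. To compute $\tilde k^i_1$, I would exploit the elementary identity $M^i\mathbf{1}=(m^i+2\mathbf{1})-(e^i_1+e^i_{p_i})$, which reflects the fact that the two endpoints of the ray have a single ray-neighbor each while interior ray vertices have two. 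Inverting gives $\tilde k^i=-\mathbf{1}-(M^i)^{-1}(e^i_1+e^i_{p_i})$, so $\tilde k^i_1=-1-(M^i)^{-1}_{11}-(M^i)^{-1}_{1,p_i}$. A direct Cramer computation shows that the minor of $M^i$ obtained by deleting row $p_i$ and column $1$ is lower triangular with $1$'s on the diagonal, hence has determinant $1$; this forces $(M^i)^{-1}_{1,p_i}=-1/a_i$, and therefore $\tilde k^i_1=(1-a_i-b_i)/a_i$.

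Substituting these values and then applying equation~\eqref{eq:sphere} in the form $a_1a_2+a_2b_1+a_1b_2=1$ yields
\begin{equation*}
k_v=a_1a_2+a_2(1-a_1-b_1)+a_1(1-a_2-b_2)=a_1+a_2-(a_1a_2+a_2b_1+a_1b_2)=a_1+a_2-1,
\end{equation*}
as claimed. The main technical delicacy—not really an obstacle, but the place where signs must be tracked carefully—is keeping straight the Cramer's-rule signs so that $(M^i)^{-1}_{11}=b_i/a_i$ and $(M^i)^{-1}_{1,p_i}=-1/a_i$ come out correctly; after that, everything is routine linear algebra, and in particular no induction on the length of the rays is needed.
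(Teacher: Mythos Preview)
Your proof is correct. Both your argument and the paper's rest on the same underlying observation---that a change at a vertex $u$ adds $2Me_u$ to the association vector---and both use the facts $n_0=m+2\mathbf{1}$ and $n_N=-m-2\mathbf{1}$ established just before the lemma. Where they diverge is in how the resulting linear-algebra problem is solved.

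The paper constructs, by hand, the vector $n=M^{-1}e_v$: it writes $n(w^1_i)=C_1B_i$, $n(w^2_j)=A_1D_j$, $n(v)=-A_1C_1$ in terms of the continued-fraction tails $A_i/B_i=[t_i;\ldots;t_p]$ and $C_j/D_j=[s_j;\ldots;s_q]$, verifies case by case that $\langle n\mid n_k-n_{k-1}\rangle$ vanishes unless $w_k=v$ (equivalently, that $Mn=e_v$), and then evaluates $\langle n\mid n_N-n_0\rangle$ by a telescoping sum. You instead solve $Mk=-m-2\mathbf{1}$ directly, using the block structure of $M$ along the two rays and Cramer's rule to extract the two relevant entries of each $(M^i)^{-1}$; the identity $M^i\mathbf{1}=(m^i+2\mathbf{1})-(e^i_1+e^i_{p_i})$ is a clean device that avoids ever writing down the convergents. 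Your route is more structural and makes the independence of $k_v$ from the choice of good sequence immediate from the unimodularity of $M$; the paper's route is more explicit about the continued-fraction data, which connects naturally to how these quantities are used later. Either way, the final reduction to $a_1a_2+a_2b_1+a_1b_2=1$ is the same.
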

\begin{proof}
If $n,n':V(G)\ra \Z$ are two given functions, define $\langle n|n'\rangle=\sum_{w\in V(G)}n(w).n'(w)$. Let $A_i/B_i=[t_i;...;t_p]$
and $C_j/D_j=[s_j;...;s_q]$ for $i=1,...,p$ and $j=1,...,q$. Thus $A_1/B_1=a_1/b_1$ and $C_1/D_1=a_2/b_2$.
Assume that $A_i,C_i>0$ and $B_i,D_i<0$. Define $n(w^1_i):=C_1B_i$, $n(w^2_j):=A_1D_j$, and
$n(v)=-A_1C_1$. We may then compute $\langle n| n_k\rangle -\langle n| n_{k-1}\rangle$
for $k=1,...,N$ as follows:\\
$\bullet$ If $w_k=w^1_i$ for some $i=2,...,p$, we have
\begin{equation}
\begin{split}
\frac{1}{2}\big(\langle n| n_k\rangle -\langle n| n_{k-1}\rangle\big)&=\frac{1}{2}\sum_{w\in V(G)}n(w)(n_k(w)-n_{k-1}(w))\\
&=n(w^1_i)m(w^1_i)+n(w^1_{i-1})+n(w^1_{i+1})\\&=C_1(t_iB_i+B_{i-1}+B_{i+1}),
\end{split}
\end{equation}
where it is understood that $B_{p+1}=0$. From the definition of $A_\ell$ and $B_\ell$, it is clear that 
$\frac{A_\ell}{B_\ell}=t_\ell-\frac{B_{\ell+1}}{A_{\ell+1}}$ and thus
$B_{\ell}=-A_{\ell+1}$ and
$A_{\ell}=-t_\ell A_{\ell+1}+B_{\ell+1}$ for  all $\ell$. Combining these two relations, we obtain $t_iB_i+B_{i-1}+B_{i+1}=0$
for $i=2,...,p$ and $\langle n| n_k\rangle=\langle n| n_{k-1}\rangle$ in this case.\\
$\bullet$ If $w_k=w^1_1$, we would have
\begin{equation}
\begin{split}
\frac{1}{2}\big(\langle n| n_k\rangle -\langle n| n_{k-1}\rangle\big)&=
n(w^1_1)m(w^1_1)+n(v)+n(w^1_{2})\\&=C_1(t_1B_1-A_1+B_{2}),
\end{split}
\end{equation}
and $t_1B_1-A_1+B_2$ is zero with the same reasoning. Thus    $\langle n| n_k\rangle
=\langle n| n_{k-1}\rangle$.\\
$\bullet$ If $w_k=w^2_j$ for some $j=1,...,q$, we would have  $\langle n| n_k\rangle=\langle n| n_{k-1}\rangle$ with
a similar argument.\\
$\bullet$ If $w_k=v$, it is implied that
\begin{equation}
\begin{split}
\frac{1}{2}\big(\langle n| n_k\rangle -\langle n| n_{k-1}\rangle\big)&=
n(v)m(v)+n(w^1_1)+n(w^2_{1})\\&=A_1C_1+C_1B_1+A_1D_1\\&=a_1a_2+a_2b_1+a_1b_2=1,
\end{split}
\end{equation}
where the last equality is obtained from equation~\ref{eq:sphere}. \\
The above computations imply that whenever $v$ appears as $w_k$ in $w_1,...,w_N$, the value of $\langle n| n_k\rangle$
in comparison with $\langle n| n_{k-1}\rangle$ jumps by $2$.
Thus, the number of times $v$ appears in the
sequence of vertices $w_1,...,w_k$ is equal to $\frac{1}{2}\big(\langle n| n_N\rangle -\langle n| n_{0}\rangle\big)$.
Since  $n_0(w)=-n_N(w)=2+m(w)$ for all $w\in V(G)$, we may compute  
\begin{equation}\label{eq:main}
\begin{split}
\langle n| n_N\rangle -&\langle n| n_{0}\rangle
=\sum_{w\in V(G)}n(w)(n_N(w)-n_{0}(w))\\
&=2\Big[C_1\big(\sum_{i=1}^pB_i(-2-t_i)\big) +A_1\big(\sum_{j=1}^qD_j(-2-s_j)\big)+A_1C_1\Big]\\
&=2\Big[-C_1\big(\sum_{i=2}^p(B_{i-1}+t_iB_i+B_{i+1})\big)-C_1B_1(1+t_1)\\ &\ \ -C_1B_2-C_1B_p
 -A_1\big(\sum_{j=2}^q(D_{j-1}+s_jD_j+D_{j+1})\big)\\ &\ \ -A_1D_1(1+s_1)-A_1D_2-A_1D_q+A_1C_1\Big]\\
&=-2\Big[-A_1C_1+C_1B_1(1+t_1)+A_1D_1(1+s_1)\\ 
&\ \ \ \ \ \ \ \ \ \ \ \ \ \ \ \ \ \ \ \ \ \ \ \ \ \ \ \ \ \ \ +C_1B_2+A_1D_2-C_1-A_1 \Big].
\end{split}
\end{equation}
In the above computation, the last equality is the result of the equations $B_{i-1}+t_iB_i+B_{i+1}=0=D_{j-1}+s_jD_j+D_{j+1}$ for
$i=2,...,p$ and $j=2,..,q$ and the fact that $B_p=D_q=-1$.
Since $\frac{A_1}{B_1}=t_1-\frac{B_2}{A_2}$ and $\frac{C_1}{D_1}=s_1-\frac{D_2}{C_2}$
we have $B_2=A_1-t_1B_1$ and $D_2=C_1-s_1D_1$. Replacing these expressions for $B_2$ and $D_2$ in equation~\ref{eq:main}
we obtain
\begin{equation}
\begin{split}
\langle n| n_N\rangle -\langle n| n_{0}\rangle&=-2(A_1C_1+C_1B_1+A_1D_1-A_1-C_1)\\
&=2(A_1+C_1-1)=2(a_1+a_2-1).
\end{split}
\end{equation}
This completes the proof of lemma.
\end{proof}
\section{Proof of the main theorem}
We are now ready to prove the main theorem of this paper.
\begin{thm}
 If $Y$ is a Seifert fibered integer homology sphere and $\HF(Y)=\Z$,
then $Y$  is the Poincar\'e sphere with one of the two possible orientations.
\end{thm}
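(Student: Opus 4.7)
I argue by contradiction: suppose $Y = Y(G)$ is a Seifert fibered integer homology sphere with $\HF(Y) = \Z$ and $Y \notin \{S^3, P, -P\}$. By the reductions at the end of Section~2, $G$ is a negative-definite star-shaped plumbing with central vertex $v$ of weight $m(v) = -1$, $n \geq 3$ rays with continued fractions $a_i/b_i = [m^i_1; \ldots; m^i_{p_i}]$ and all $m^i_j \leq -2$ (so $b_i \leq -1$), satisfying the Seifert identity $\prod_i a_i \bigl(1 + \sum_i b_i/a_i\bigr) = 1$. The uniqueness argument recalled just before Lemma~\ref{lem:2} forces the unique initial association completable to a good sequence to be the minimal one $n_0(w) = m(w) + 2$, with final association $n_N(w) = -m(w) - 2$.

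Let $T_w$ denote the number of times $w$ appears in the vertex sequence $w_1, \ldots, w_N$ of the good sequence. Each change at $w$ shifts $n(w)$ by $2m(w)$, while each change at a neighbor shifts $n(w)$ by $+2$; summing these effects against $n_N(w) - n_0(w) = -2m(w) - 4$ gives the linear system $MT = -m - 2\mathbf{1}$, where $M$ is the intersection form. Since $\det M = \pm 1$, this determines $T$ uniquely, and the existence of the good sequence requires every $T_w$ to be a non-negative integer.

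The main step computes $T_v$ by adapting the inner-product argument of Lemma~\ref{lem:2}. Setting $A^i_j / B^i_j = [m^i_j; \ldots; m^i_{p_i}]$ (so $B^i_1 = b_i$ and $B^i_{p_i} = -1$), I define
\[
n(v) = -\prod_{i=1}^n a_i, \qquad n(w^i_j) = B^i_j \prod_{k \neq i} a_k.
\]
The continued fraction recurrence $B^i_{j-1} + m^i_j B^i_j + B^i_{j+1} = 0$ (extended by $B^i_0 := -a_i$ so that the recurrence also holds at $j = 1$) makes the increment $\langle n | n_k\rangle - \langle n | n_{k-1}\rangle$ vanish whenever $w_k$ is a ray vertex, while at $v$ the Seifert identity yields an increment of exactly $+2$. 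A telescoping computation along each ray produces $\sum_{j=1}^{p_i} B^i_j(m^i_j + 2) = a_i + b_i - 1$, and assembling the contributions gives the closed form
\[
T_v = \sum_{i=1}^n \prod_{k \neq i} a_k - 1 - (n-2) \prod_{i=1}^n a_i,
\]
which specializes to $a_1 + a_2 - 1$ when $n = 2$, as in Lemma~\ref{lem:2}.

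Finally, rewriting the Seifert identity as $\sum_i b_i \prod_{k \neq i} a_k = 1 - \prod_i a_i$ and using $b_i \leq -1$ gives $\sum_i \prod_{k \neq i} a_k \leq \prod_i a_i - 1$. Substituting,
\[
T_v \;\leq\; \bigl(\prod_i a_i - 1\bigr) - 1 - (n-2)\prod_i a_i \;=\; -(n-3)\prod_i a_i - 2 \;\leq\; -2
\]
for $n \geq 3$, contradicting $T_v \geq 0$. The main technical work I anticipate lies in identifying the correct multi-ray weighting $n$ and verifying the boundary computation $\sum_j B^i_j(m^i_j + 2) = a_i + b_i - 1$ cleanly; once that is established, the contradiction with positivity is a one-line consequence of the Seifert identity, finishing the proof.
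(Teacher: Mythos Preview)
Your argument is correct and takes a genuinely different, cleaner route than the paper.

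\textbf{What the paper does.} After the common reductions to $m(v)=-1$ and $n\geq 3$, the paper first treats the sub-case where every $m^i_1\leq -3$ by exhibiting many initial/final associations directly, deducing $n=2$, a contradiction. In the remaining case $m^1_1=-2$, it invokes the two technical lemmas about two-ray $S^3$ diagrams (Lemma~\ref{lem:1} and Lemma~\ref{lem:2}) to control how far the vertex path of the good sequence can wander, pins it inside the sub-graph $G''$, and then hand-builds a second good sequence starting from a different initial association.

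\textbf{What you do.} You generalise the inner-product trick of Lemma~\ref{lem:2} from two rays to $n$ rays with the weighting $n(v)=-\prod_i a_i$, $n(w^i_j)=B^i_j\prod_{k\neq i}a_k$. The continued-fraction recurrences and the Seifert identity make the increments vanish at ray vertices and equal $+2$ at $v$, and the telescoping $\sum_j B^i_j(m^i_j+2)=a_i+b_i-1$ yields the closed form $T_v=\sum_i\prod_{k\neq i}a_k-1-(n-2)\prod_i a_i$. Then the single inequality $\sum_i\prod_{k\neq i}a_k\leq \prod_i a_i-1$, which is immediate from the Seifert identity together with $b_i\leq -1$, forces $T_v\leq -(n-3)\prod_i a_i-2\leq -2$ for $n\geq 3$, contradicting $T_v\geq 0$.

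\textbf{Comparison.} Your approach bypasses Lemma~\ref{lem:1} entirely, eliminates the case split on whether all $m^i_1\leq -3$, and avoids the delicate sub-graph analysis with $G'$ and $G''$. The paper's argument, on the other hand, produces an explicit second good sequence (hence a concrete second generator of $\mathrm{Ker}\,U$), which is slightly more informative but considerably longer. Both use the same underlying inputs: the uniqueness of the initial association forcing $n_0(w)=m(w)+2$ and $n_N(w)=-m(w)-2$, and the pairing idea from Lemma~\ref{lem:2}. Your linear-system remark $MT=-m-2\mathbf{1}$ is correct but not strictly needed, since the pairing computation already shows that \emph{any} good sequence with those endpoints has the stated $T_v$.
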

\begin{proof}
 We have already seen that if $Y$ is a Seifert fibered integer homology sphere different
from the Poincar\'e sphere and $\HF(Y)=\Z$, then $Y$ (with one of the two orientations) may be realized as $Y(G)$ for a
star-shape negative definite graph with vertices $v$ and $w^i_j$, $i=1,...,n$ and $j=1,...,p_i$, as
in the second section.
Moreover, we have seen that the weight function $m:V(G)\ra \Z^{<0}$ would be so that
$m(v)=-1$. As before, assume that  $m^i_j=m(w^i_j)$ and $q_i=a_i/b_i=[m^i_1;m^i_2;...;m^i_{p_i}]$ for
$i=1,...,n$, with $a_i>0$, and $q_1>q_2>...>q_n$. Note that this data should satisfy the
equation
\begin{equation}\label{eq:Seifert-numbers}
1+\frac{b_1}{a_1}+\frac{b_2}{a_2}+...+\frac{b_n}{a_n}=\frac{1}{a_1...a_n}.
\end{equation}
If all $m^i_1$ are less than $-2$, choosing $n(v)=1, n(w^i_1)\in[2+m^i_1,-m^i_1-4]$ and  $n(w^i_j)\in[2+m^i_j,-m^i_j-2]$ for
$j>1$, we get an initial association which may be completed to a good sequence $(n,n')$, where
$n'(v)=-1$, $n'(w^i_1)=n(w^i_1)+2\leq-m^i_1-2$, and $n'(w^i_j)=n(w^i_j)$ for $j>1$. Since all
good sequences start from a unique initial association, we should have $2+m^i_1=-m^i_1-4$,
and $2+m^i_j=-m^i_j-2$ for $j>1$. This means that
$m^i_1=-3$ and $m^i_j=-2$ for $i=1,...,n$ and $j>1$. We may thus compute $a_i=2p_i+1$ and $b_i=-p_i$ for
$i=1,...,n$. Equation~\ref{eq:Seifert-numbers} implies that
\begin{equation}
1-\big(\frac{p_1}{2p_1+1}+\frac{p_2}{2p_2+1}+...+\frac{p_n}{2p_n+1}\big)>0.
\end{equation}
Since $p_i/(2p_i+1)\geq 1/3$, the above inequality means that $n= 2$. But for $n=2$,
equation~\ref{eq:Seifert-numbers} implies that
$1-\frac{p_1}{2p_1+1}-\frac{p_2}{2p_2+1}=\frac{1}{(2p_1+1)(2p_2+1)}$
which is equivalent to $p_1+p_2=0$. Since this is a contradiction, at least one of $m^i_1,\ i=1,...,n$ 
should be equal to $-2$.
It is easy to see that $q_1>...>q_n$ implies $i=1$.\\
Let $p$ and $q$ be the largest numbers, so that for $A/B=[m^1_1,...,m^1_p]$ and $C/D=[m^2_1,...,m^2_q]$ we have
$AC+AD+BC=1$, i.e. the star-like negative definite graph with two rays corresponding to these two negative rational
numbers is the standard sphere. There are unique negative integers $t$ and $s$, so that the pairs of rational
numbers $([m^1_1,...,m^1_p, t],[m^2_1,...,m^2_q])$, $([m^1_1,...,m^1_p],[m^2_1,...,m^2_q,s])$, and
$([m^1_1,...,m^1_p, t],[m^2_1,...,m^2_q,s])$ all correspond to standard spheres.
If $m^1_{p+1}$ is greater than $t$, lemma~\ref{lem:1} implies that
\begin{equation}
 \frac{b_1}{a_1}+\frac{b_2}{a_2}\leq \frac{1}{[m^1_1;...;m^1_{p+1}]}+\frac{1}{[m^2_1;...;m^2_q]}\leq -1.
\end{equation}
This contradicts equation~\ref{eq:Seifert-numbers} and the fact that $b_i/a_i<0$, while $a_i>0$. Similarly
$m^2_{q+1}$ may not be greater than $s$. By the assumption on $p$ and $q$, the equality can not happen too.
Thus $m^1_{p+1}<t$ and $m^2_{q+1}<s$. Let $G'$ be the negative definite star-like graph with two rays
representing the standard sphere which corresponds to the pair of negative rational numbers
$([m^1_1;...;m^1_p;t],[m^2_1;...;m^2_q;s])$.\\
Let $A/B=[m^1_1;m^1_2;...;m^1_p]$ and $C/D=[m^2_1;m^2_2;...;m^2_q]$. It is then easy to check that
$a_1/b_1\geq A/B$ and $a_2/b_2\geq C/D$. Thus
$$1+\frac{b_1}{a_1}+\frac{b_2}{a_2}\leq  1+\frac{B}{A}+\frac{D}{C}=\frac{1}{AC}.$$
As a consequence, we should have $m^i_1\leq -AC-1$ for all $i=3,...,n$.\\
Since $Y(G)$ has trivial Heegaard Floer homology, we 
know that the initial association $n$ defined by $n(w)=2+m(w)$ for all $w\in V(G)$ may be completed
to a good sequence $n=n_0,n_1,...,n_N$, with the associated path of vertices $w_1,...,w_N$. Let $k$ be the
smallest integer with the property that $w_{k+1}\in\{w^i_j\ |\ i=3,...,n, j=1,..,p_i\}$. In particular we assume
$$w_1,...,w_k\in\{w^i_j\ |\ i=1,2, j=1,..,p_i\}\cup\{v\}.$$
This simply implies that
$w_{k+1}\in\{w^i_1\ |\ i=3,...,n\}$.
Our first claim is that
$$\{w_1,...,w_k\}\subset\mathcal{A}=\{w^1_1,...,w^1_p,w^2_1,...,w^2_q,v\}.$$
If this is not the case, let $w_\ell$ be the first element of this set which is not in $\mathcal{A}$.
Thus,  $w_\ell$ is one of the vertices in $\mathcal{B}=\{w^1_{p+1},w^2_{q+1}\}$.
The vertices in $G'$ are in correspondence with the vertices in $\mathcal{A}\cup\mathcal{B}$, and
we will abuse the notation by using the same names for its vertices. Construct
the sequence $n'_0,...,n'_{\ell-1}:V(G')\ra \Z$ by letting
\begin{equation}
n'_i(w)=\begin{cases}
n_i(w)\ \ \ \ \ \ \ \ \ \ \ \ \ \ \ \ \ \ \ \ \ \ \text{if }w\in \mathcal{A}\\
n_i(w^1_{p+1})-m^1_{p+1}+t\ \ \ \text{if }w=w^1_{p+1}\\
n_i(w^2_{q+1})-m^2_{q+1}+s\ \ \ \text{if }w=w^2_{q+1}
\end{cases}
\end{equation}
 Note that $n_0'$ is the initial association
in the unique good sequence for $S^3=Y(G')$, and all the changes in the sequence $n'_0,...,n'_{\ell-1}$
are allowed changes. Since $n_0'$ is the initial association in a good sequence, $n'_{\ell-1}$ would satisfy
$|n'_{\ell-1}(w)|\leq -m'(w)$ for all $w\in V(G')$, where $m'$ is the weight function for $G'$.
However, since $w_\ell\in \mathcal{B}$, we should have $n_{\ell-1}(w)=-m(w)$ for some
$w\in \mathcal{B}$, say $w^1_{p+1}$. But this implies that
$$n'_{\ell-1}(w^1_{p+1})=-m^1_{p+1}-m^1_{p+1}+t>-t=|m'(w^1_{p+1})|,$$
which is a contradiction, proving our first claim.\\
Let $G''$ be the negative definite star-like graph with two rays corresponding to the
pair of negative rational numbers $([m^1_1;...;m^1_p],[m^2_1;...;m^2_q])$, identify the
vertices of $G''$ with the elements of $\mathcal{A}$, and denote the weight function on the vertices of
$G''$ by $m'':V(G'')\ra \Z$. Then we may restrict $n_0,...,n_k$ to the vertices of $G''$ and
think of them as associations for this negative definite graph, and of $w_1,...,w_k$ as part of the
sequence of vertices associated with the good sequence extending $n_0,...,n_k$
(since they are all elements of $\mathcal{A}=V(G'')$).
The number of times $v$ appears is this sequence is less than or equal to the number of times
 $v$ appears in the sequence of vertices associated with  a good sequence associated with $G''$.
By lemma~\ref{lem:2} this later number is equal to
$A+C-1$. Thus, $n_{k}(w^i_1)\leq m^i_1+2A+2C$ for all $i=3,...,n$.
If $w_{k+1}=w^i_1$ for some $i\geq 3$, we have $n_k(w^i_1)=-m^i_1$ and thus $m^i_1\geq -A-C$.
But we already know that $m^i_1\leq -AC-1$. Combining these two inequalities, we have $A+C\geq AC+1$,
which is a contradiction since $A,C\geq 2$. The only remaining possibility is that
$$\{w_1,...,w_N\}\subset \mathcal{A}=\{w^1_1,...,w^1_p,w^2_1,...,w^2_q,v\}.$$
If this is the case, by restricting the associations $n_0,...,n_N$ to the vertices
of $G''$ we obtain a good sequence $n_0'',...,n_N'':V(G'')\ra \Z$, which would be a
good sequence  associated with $S^3=Y(G'')$ extending a unique initial association.
The number of times $v$ appears in the sequence of vertices $w_1,...,w_N$ is thus $A+C-1$. \\
Let $n_i':V(G)\ra \Z,\ i=0,...,N$ be new associations defined by $n_i'(w^3_1)=n_i(w^3_1)+2$ and
$n_i'(w)=n_i(w)$ for $w\neq w^3_1$. Note that $n_i'(w^3_1)\leq m^3_1+2(A+C)$, since $v$ appears
at most $A+C-1$ times in the sequence $w_1,...,w_N$. If $n_i'(w^3_1)\geq -m^3_1$, we would have
$-m^3_1\leq A+C$. This is a contradiction with what we proved earlier that $-m^3_1\geq AC+1$.
Thus, all $n_i'$ are in fact associations, and $n_N'$ is a final association. This means that
the sequence $n_0',...,n_N'$ is a good sequence starting from an initial association different from
$n_0$. So the rank of the kernel of $U:\mathrm{HF}^+(Y(G))\ra \mathrm{HF}^+(Y(G))$ is at least $2$.
This contradiction completes the proof of our main theorem.
\end{proof}


\end{document}